\newtheorem{theorem}{Theorem}[section]
\newtheorem{lemma}[theorem]{Lemma}
\numberwithin{equation}{section}
\DeclareMathOperator{\supp}{supp}
\title[Low dimensional pinned distance sets]{Low dimensional pinned distance sets via spherical averages}
\author[T.~L.~J.~Harris]{Terence L.~J.~Harris}
\address{Department of Mathematics, Cornell University, Ithaca, NY 14853-4201, USA}
\email{tlh236@cornell.edu}
\begin{document}
\begin{abstract} \begin{sloppypar} An inequality is derived for the average energies of pinned distance measures. This refines Mattila's theorem on distance sets \cite{mattila} to pinned distance sets, and gives an analogue of Liu's theorem~\cite{liu1} for pinned distance sets of dimension smaller than 1. \end{sloppypar} \end{abstract}
\maketitle 
\section{Introduction}
For $n \geq 2$ let $d: \mathbb{R}^n \times \mathbb{R}^n \to [0, \infty)$ be the Euclidean distance function $(x,y) \mapsto \lvert x-y \rvert$, and for fixed $x \in \mathbb{R}^n$ let $d_x : \mathbb{R}^n \to [0, \infty)$ be the pinned distance function $d_x(y) = d(x,y)$. For finite compactly supported Borel measures $\mu$ and $\nu$ on $\mathbb{R}^n$, define $\Delta(\mu,\nu)$ by
\begin{align*} \notag \int \phi(s) \, d\Delta(\mu,\nu)(s) &= \int s^{-(n-1)/2} \phi(s) \, d_{\#}(\mu,\nu)(s) \\
 &= \int \int |x-y|^{-(n-1)/2} \phi\left( |x-y| \right) \, d\mu(x) \, d\nu(y), \end{align*} 
for all non-negative Borel measurable $\phi: \mathbb{R} \to [0, \infty]$. Let
\[  \sigma(\mu, \nu)(r) = \int_{S^{n-1}} \widehat{\mu(r\xi) } \overline{ \widehat{ \nu(r \xi ) } } \, d\sigma(\xi), \quad r \geq 0,  \]
where $\sigma$ is the surface measure on the sphere $S^{n-1}$, and 
\[ \widehat{\mu}(\xi) = \int e^{-2\pi i \langle x, \xi \rangle } \, d\mu(x). \]
 The aim of this work is to give a sufficient condition for the inequality
\begin{equation} \label{energyinequality} \int I_t( \Delta(\nu, \delta_x )) \, d\mu(x) \lesssim_{t, \alpha,\gamma} c_{\alpha}(\mu) I_{\gamma}(\nu), \end{equation}
in terms of $t$, $\alpha$ and $\gamma$, via the $L^2$ spherical averages of $\widehat{\mu}$. Here 
\[ c_{\alpha}(\mu) = \sup_{\substack{x \in \mathbb{R}^n \\ r >0 } } \frac{ \mu(B(x,r))}{r^{\alpha}}, \]
the mutual $t$-energy of $\mu$ and $\nu$ is defined by 
\[ I_t(\mu, \nu) = \int \int |x-y|^{-t} \, d\mu(x) \, d\nu(y), \]
 and $I_t(\mu) := I_t(\mu,\mu)$. The main result, Theorem~\ref{pinned}, refines Mattila's theorem (see \cite[Theorem~4.16 and Theorem~4.17]{mattila} or \cite[Proposition~15.2]{mattila2}) to pinned distance measures, and is the Hausdorff dimension analogue to Liu's theorem~\cite{liu1}, which gives a sufficient condition for the support of some $d_{x \#}(\mu)$ to have positive Lebesgue measure. The proof consists of augmenting Mattila's proof with a ``Cauchy-Schwarz reversal'' technique (also used in \cite{liu1}). I do not know if the proof from \cite{liu1} can be directly extended to lower dimensional pinned distance sets. The two main tools used in \cite{liu1} are Lemma~\ref{csreversal} (stated below) and Liu's identity \cite[Theorem~1.9]{liu1}:
\begin{equation} \label{liuidentity} \int_0^{\infty} \left\lvert \left(\sigma_t \ast f\right)(x) \right\rvert^2 t^{n-1} \, dt  = \int_0^{\infty} \left\lvert \left( \widehat{ \sigma_r } \ast f \right)(x) \right\rvert^2 r^{n-1} \, dr, \quad x \in \mathbb{R}^n,  \end{equation}
for Schwartz $f$ on $\mathbb{R}^n$, where $\sigma_r$ is the pushforward of $\sigma$ under the map $\xi \mapsto r \xi$. One key part of the proof of Mattila's theorem is the identity \cite[Theorem 4.6]{mattila}
\begin{equation} \label{mattilaidentity} \int_0^{\infty} \Delta(f,g)(t) \Delta(h,k)(t) \, dt  =  c_n\int_0^{\infty} \Sigma(f,g)(r) \Sigma(h,k)(r) \, dr,  \end{equation}
for Schwartz $f,g,h,k$ on $\mathbb{R}^n$, where 
\[ \Sigma(f, g)(r) = r^{(n-1)/2}\sigma(f, g)(r), \quad r \geq 0. \]
The identity \eqref{liuidentity} can be formally obtained from \eqref{mattilaidentity} by setting $h=\overline{f}$ and $g=k = \delta_x$, and then setting $f$ equal to a Gaussian to obtain that $c_n =1$. In the original application Mattila used $g= f$ and $h=k = \overline{f}$. This suggests that using $g = k = \delta_x$ in Mattila's original proof should result in a pinned distance version, and this is the main idea guiding the proof of Theorem~\ref{pinned} below.

The case $\mu = \nu$ in \eqref{energyinequality} is related to the distance set problem, which asks whether the condition $\dim A > n/2$ for $A \subseteq \mathbb{R}^n$ implies that $d(A \times A)$ has positive Lebesgue measure. This is still open, as is the stronger pinned version, which asks whether $\dim A > n/2$ implies the existence of some $x \in A$ such that $d_x(A)$ has positive measure. 

Throughout, given $\alpha \in [0,n]$ let $\beta\left(\alpha, S^{n-1} \right)$ be the supremum over all $\beta \geq 0$ such that for all Borel measures $\mu$ with $\supp \mu \subseteq B(0,1)$,
\[ \int \left\lvert \widehat{\mu}(r \xi ) \right\rvert^2 \, d\sigma(\xi) \lesssim_{\beta} r^{-\beta} I_{\alpha}(\mu), \quad \forall r > 0. \]
The following ``Cauchy-Schwarz reversal'' lemma will be needed. In \cite[p.~197]{mattila2} it is attributed to \cite[Lemma~C.1]{bbcr}; the version below is identical to Lemma~3.2 from \cite{liu1}.
\begin{lemma} \label{csreversal} Let $\mu$ be a Borel measure supported in the unit ball of $\mathbb{R}^n$. Then for any $\alpha \in [0,n]$ and any $\epsilon >0$,
 \[ \int \left|\left( \widehat{ \sigma_r} \ast f \right)(x)  \right|^2 \, d\mu(x)  \lesssim_{\alpha, \epsilon} c_{\alpha}(\mu) r^{\epsilon - \beta\left(\alpha, S^{n-1}\right)} \int \left| \widehat{f}(r\xi) \right|^2 \, d\sigma(\xi), \quad \forall r \geq 1, \]
for all Schwartz $f$. 
\end{lemma}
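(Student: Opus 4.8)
The plan is to deduce the lemma from a self-improving (``Cauchy--Schwarz reversal'') scheme. Put $G = \widehat{\sigma_r} \ast f$, a bounded continuous function, so that $G\mu$ is a finite complex measure supported in $B(0,1)$ and $\widehat{G\mu}$ is bounded. The first step is to prove the auxiliary inequality
\begin{equation}\label{key}
 \int_{S^{n-1}} \bigl| \widehat{G\mu}(r\omega) \bigr|^2 \, d\sigma(\omega) \;\lesssim_{\alpha,\epsilon}\; c_\alpha(\mu)\, r^{\epsilon - \beta(\alpha, S^{n-1})} \int |G|^2 \, d\mu, \qquad r \geq 1,
\end{equation}
for every bounded Borel function $G$ on $\mathbb R^n$ (it will be applied with $G = \widehat{\sigma_r}\ast f$). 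Granting \eqref{key}, the lemma follows at once: the multiplication formula together with $\widehat{\widehat{\sigma_r}} = \sigma_r$ gives the identity $\int |G|^2 \, d\mu = \int_{S^{n-1}} \widehat f(r\omega)\, \overline{\widehat{G\mu}(r\omega)} \, d\sigma(\omega)$ (which one can sanity-check against $\mu = \delta_0$), so Cauchy--Schwarz on the sphere and then \eqref{key} yield
\[
 \int |G|^2 \, d\mu \leq \Bigl( \int_{S^{n-1}} |\widehat f(r\omega)|^2 \, d\sigma(\omega) \Bigr)^{1/2} \Bigl( \int_{S^{n-1}} |\widehat{G\mu}(r\omega)|^2 \, d\sigma(\omega) \Bigr)^{1/2} \lesssim \Bigl( \int_{S^{n-1}} |\widehat f(r\omega)|^2 \, d\sigma(\omega) \Bigr)^{1/2} \Bigl( c_\alpha(\mu)\, r^{\epsilon-\beta(\alpha, S^{n-1})} \!\int |G|^2\, d\mu \Bigr)^{1/2}.
\]
Since $\int |G|^2\, d\mu$ is finite ($G$ bounded, $\mu$ finite), one divides by $\bigl(\int |G|^2\, d\mu\bigr)^{1/2}$ and squares; the case $\int|G|^2\,d\mu = 0$ is trivial.

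For \eqref{key} I would mollify at the critical scale $1/r$. Fix once and for all an even, non-negative $\phi \in C_c^\infty(\mathbb R^n)$ with $\int \phi = 1$ and $\supp \phi \subseteq B(0, \delta_0)$, where $\delta_0$ is small enough that $\widehat\phi \geq 1/2$ on $B(0,1)$ (possible since $|\widehat\phi(\zeta) - 1| \leq 2\pi \delta_0 |\zeta|$); write $\phi_{1/r}(x) = r^n \phi(rx)$. Put $H = (G\mu) \ast \phi_{1/r}$ and $\psi = (|G|\mu) \ast \phi_{1/r}$; as $r \geq 1$, both are bounded functions supported in $B(0,2)$. Because $\widehat{\phi_{1/r}}(r\omega) = \widehat\phi(\omega)$ for $\omega \in S^{n-1}$, one has $|\widehat H(r\omega)| \geq \tfrac12 |\widehat{G\mu}(r\omega)|$, so the left side of \eqref{key} is $\leq 4 \int_{S^{n-1}} |\widehat H(r\omega)|^2 \, d\sigma(\omega)$. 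Splitting $H$ into its four non-negative parts (real/imaginary, positive/negative), each is a bounded measure supported in $B(0,2)$ with $I_\alpha(\cdot) \leq I_\alpha(\psi)$ (since it is $\leq |H| \leq \psi$ pointwise), so the defining inequality for $\beta(\alpha, S^{n-1})$, in its harmless rescaled form for $B(0,2)$, gives $\int_{S^{n-1}} |\widehat H(r\omega)|^2 \, d\sigma(\omega) \lesssim_{\alpha,\epsilon} r^{\epsilon/2 - \beta(\alpha, S^{n-1})} I_\alpha(\psi)$.

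It remains to prove the energy estimate $I_\alpha(\psi) \lesssim_\alpha c_\alpha(\mu)(1 + \log r) \int |G|^2 \, d\mu$, and this is where I expect the real work to lie; it is precisely what makes mollification at scale $1/r$ affordable (the naive route through $I_s(\mu)$ with $s<\alpha$ would only produce $\beta(s,S^{n-1})$, and the mollification closes this gap at the cost of a logarithm, absorbed into $r^\epsilon$). Writing $\psi = \nu \ast \phi_{1/r}$ with $\nu = |G|\mu$ one has $I_\alpha(\psi) = \iint K(u-v) \, d\nu(u) \, d\nu(v)$ with $K = |\cdot|^{-\alpha} \ast \phi_{1/r} \ast \phi_{1/r}$, and since $\phi_{1/r} \ast \phi_{1/r}$ has total mass $1$, sup-norm $\lesssim r^n$ and support in $B(0, 2\delta_0/r)$, one checks $K(w) \lesssim_\alpha \min\{ |w|^{-\alpha}, r^\alpha\}$. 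Then, using $|G(u)||G(v)| \leq \tfrac12(|G(u)|^2 + |G(v)|^2)$ together with the Frostman bound $\mu(B(u,t)) \leq c_\alpha(\mu) t^\alpha$: the part of $I_\alpha(\psi)$ coming from $|u-v| < 1/r$ is $\lesssim r^\alpha \int |G(u)|^2 \mu(B(u, 1/r)) \, d\mu(u) \lesssim c_\alpha(\mu) \int |G|^2 \, d\mu$, while for $|u-v| \geq 1/r$ one uses $\int_{|u-v| \geq 1/r} |u-v|^{-\alpha} \, d\mu(v) \lesssim_\alpha c_\alpha(\mu)(1 + \log r)$, obtained by integrating $t^{-\alpha-1} \mu(B(u,t))$ from $1/r$ to the diameter of $\supp\mu$. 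Finally, two minor points remain: the endpoint $\alpha = n$ (where $I_\alpha(\psi) = +\infty$) should be treated by a direct, easier argument or by letting $\alpha \to n^-$; and the identity $\int |G|^2 \, d\mu = \int_{S^{n-1}} \widehat f(r\omega)\, \overline{\widehat{G\mu}(r\omega)} \, d\sigma(\omega)$ should be verified by a short Fourier computation (expand $G = \widehat{\sigma_r}\ast f$, apply Fubini and the multiplication formula).
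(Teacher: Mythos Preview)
The paper does not give its own proof of this lemma; it is quoted verbatim from \cite[Lemma~3.2]{liu1} (and attributed further to \cite[Lemma~C.1]{bbcr}). Your argument is correct and is essentially the standard proof from those references: mollify $G\mu$ at scale $1/r$, use the Frostman condition to bound $I_\alpha$ of the mollification by $c_\alpha(\mu)(1+\log r)\int |G|^2\,d\mu$, apply the defining inequality for $\beta(\alpha,S^{n-1})$ to the (non-negative) mollified pieces, and close via the identity $\int |G|^2\,d\mu = \int_{S^{n-1}} \widehat f(r\omega)\,\overline{\widehat{G\mu}(r\omega)}\,d\sigma(\omega)$ together with Cauchy--Schwarz. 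The only loose end you flag yourself --- the endpoint $\alpha = n$ --- is indeed harmless (then $\mu$ has bounded density and one argues directly via Plancherel, or simply observes that the stated inequality for any $\alpha' < n$ already implies it since $c_{\alpha'}(\mu) \lesssim c_n(\mu)$ and $\beta(\alpha',S^{n-1}) \to \beta(n,S^{n-1})$).
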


\section{Average energies of pinned distance measures}
The following theorem is the main result. 
\begin{theorem} \label{pinned} Let $n \geq 2$. If $\alpha, \gamma \in [0,n]$ and
\begin{equation} \label{tassumption}  0 < t < \gamma+\beta\left(\alpha, S^{n-1} \right) - n+1 \leq 1, \end{equation}
then for any Borel measures $\mu$ and $\nu$ supported in the unit ball of $\mathbb{R}^n$,
\begin{equation} \label{energybound}
\begin{aligned} 
\int I_t( d_{\#}(\nu, \delta_x)) \, d\mu(x) &\lesssim_{t, \alpha,\gamma} c_{\alpha}(\mu) I_{\gamma}(\nu), \quad &\gamma \leq (n-1)/2, \\
\int I_t( \Delta(\nu, \delta_x )) \, d\mu(x) &\lesssim_{t, \alpha,\gamma} c_{\alpha}(\mu) I_{\gamma}(\nu), \quad &\gamma > (n-1)/2. \end{aligned} \end{equation} \end{theorem}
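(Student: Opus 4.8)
The plan is to run Mattila's Fourier-analytic proof of the distance set theorem with the pin $\delta_x$ occupying two of the four slots in the identity \eqref{mattilaidentity}, as the introduction suggests, and to insert the Cauchy--Schwarz reversal of Lemma~\ref{csreversal} at the step where that proof would bound a spherical average of $\widehat{\mu}$. First I would mollify only on the $\nu$-side: fix a nonnegative even Schwartz bump $\psi$ with $\int\psi=1$, put $\psi_\rho=\rho^{-n}\psi(\cdot/\rho)$ and $\nu_\rho=\nu*\psi_\rho$, so $\nu_\rho$ is Schwartz, $\lvert\widehat{\nu_\rho}\rvert\le\lvert\widehat{\nu}\rvert$ (hence $I_\gamma(\nu_\rho)\le I_\gamma(\nu)$), and $\nu_\rho\to\nu$ weakly. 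For Schwartz $f$, specializing the proof of \cite[Theorems~4.16 and~4.17]{mattila} to $g=k=\delta_x$ in \eqref{mattilaidentity} — using $\Sigma(f,\delta_x)(r)=r^{(n-1)/2}(\widehat{\sigma_r}*f)(x)$, regularizing $\delta_x$ by a Gaussian to normalize the constant, and carrying through the weight $\lvert\tau\rvert^{t-1}$ that represents $I_t$ via $I_t(\lambda)=c_t\int_{\mathbb{R}}\lvert\tau\rvert^{t-1}\lvert\widehat{\lambda}(\tau)\rvert^2\,d\tau$ on $\mathbb{R}$ (legitimate precisely because $0<t<1$, which is the content of the second inequality in \eqref{tassumption}) — one is reduced to the estimate
\[
I_t\bigl(\Delta(f,\delta_x)\bigr)\ \lesssim_{t}\ \int_0^\infty r^{\,n-2+t}\,\bigl\lvert(\widehat{\sigma_r}*f)(x)\bigr\rvert^2\,dr,
\]
the exponent $n-2+t$ being the $r^{n-1}$ of $\Sigma$ combined with $\lvert\tau\rvert^{t-1}$.

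Next I would set $f=\nu_\rho$, integrate in $x$ against $\mu$, and split the $r$-integral at $r=1$. For $r\le1$ one has $\lVert\widehat{\sigma_r}\rVert_\infty\lesssim1$, so $\lvert(\widehat{\sigma_r}*\nu_\rho)(x)\rvert\lesssim\nu(\mathbb{R}^n)$; since $\mu(\mathbb{R}^n)=\mu(B(0,1))\le c_\alpha(\mu)$ and $\nu(\mathbb{R}^n)^2\lesssim_\gamma I_\gamma(\nu)$ (because $\lvert y-z\rvert\le2$ on $\supp\nu$), that part is $\lesssim_{t,\gamma}c_\alpha(\mu)I_\gamma(\nu)$. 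For $r\ge1$, Lemma~\ref{csreversal} applied to $f=\nu_\rho$ with a small $\epsilon>0$ gives
\[
\int\bigl\lvert(\widehat{\sigma_r}*\nu_\rho)(x)\bigr\rvert^2\,d\mu(x)\ \lesssim_{\alpha,\epsilon}\ c_\alpha(\mu)\,r^{\,\epsilon-\beta(\alpha,S^{n-1})}\int_{S^{n-1}}\bigl\lvert\widehat{\nu_\rho}(r\xi)\bigr\rvert^2\,d\sigma(\xi),
\]
so, using the standard polar identity $\int_0^\infty\bigl(\int_{S^{n-1}}\lvert\widehat{\nu_\rho}(r\xi)\rvert^2\,d\sigma(\xi)\bigr)r^{\gamma-1}\,dr\simeq_{n,\gamma}I_\gamma(\nu_\rho)\le I_\gamma(\nu)$, the $r\ge1$ part is bounded by
\[
c_\alpha(\mu)\int_1^\infty r^{\,(n-2+t+\epsilon-\beta(\alpha,S^{n-1}))-(\gamma-1)}\Bigl(\int_{S^{n-1}}\lvert\widehat{\nu_\rho}(r\xi)\rvert^2\,d\sigma(\xi)\Bigr)r^{\gamma-1}\,dr .
\]
By \eqref{tassumption} the inequality $n-2+t-\beta(\alpha,S^{n-1})<\gamma-1$ is strict, so for $\epsilon$ small the power of the leading $r$ is negative and that factor is $\le1$ on $[1,\infty)$; hence $\int I_t(\Delta(\nu_\rho,\delta_x))\,d\mu(x)\lesssim_{t,\alpha,\gamma}c_\alpha(\mu)I_\gamma(\nu)$, uniformly in $\rho$.

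Finally I would remove the regularization. When $\gamma>(n-1)/2$, for $\mu$-a.e.\ $x$ the weight $\lvert x-\cdot\rvert^{-(n-1)/2}$ is $\nu$-integrable and $\nu$ is non-atomic, so $\Delta(\nu_\rho,\delta_x)\to\Delta(\nu,\delta_x)$ weakly; since the $t$-energy is lower semicontinuous under weak convergence, Fatou yields $\int I_t(\Delta(\nu,\delta_x))\,d\mu(x)\le\liminf_{\rho\to0}\int I_t(\Delta(\nu_\rho,\delta_x))\,d\mu(x)$, the second line of \eqref{energybound}. When $\gamma\le(n-1)/2$ the measure $\Delta(\nu,\delta_x)$ may be infinite, so I would instead work with $d_\#(\nu,\delta_x)$: its density at $s$ is $s^{(n-1)/2}$ times that of $\Delta(\nu,\delta_x)$ and $s\le2$ on the support, so $I_t(d_\#(\nu_\rho,\delta_x))\lesssim I_t(\Delta(\nu_\rho,\delta_x))$; applying the previous step to $\nu_\rho$ and letting $\rho\to0$ — now $d_\#(\nu_\rho,\delta_x)\to d_\#(\nu,\delta_x)$ weakly for every $x$, as $d_x$ is continuous — gives the first line of \eqref{energybound}.

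I expect the main obstacle to be the first step: turning \eqref{mattilaidentity} into the weighted, pinned inequality above, i.e.\ justifying the substitution $g=k=\delta_x$ (Gaussian regularization and a limiting argument) and propagating the weight $\lvert\tau\rvert^{t-1}$ through the identity — essentially re-running the proof of \cite[Theorems~4.16 and~4.17]{mattila} with $\delta_x$ in two slots and checking that the low-frequency and error contributions that appear there (which are not captured by the stationary-phase asymptotics of $\widehat\sigma$ pointwise, but are absorbed by the exact identity), as well as the finiteness of $\Delta(\nu,\delta_x)$ when $\gamma\le(n-1)/2$, are under control. This is the analogue of the role played in \cite{liu1} by the unweighted identity \eqref{liuidentity}. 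Once that reduction is in place, the insertion of Lemma~\ref{csreversal} and the bookkeeping with \eqref{tassumption} are routine.
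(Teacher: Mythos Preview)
Your plan is correct and matches the paper's proof: reduce to the pinned spherical-average inequality via Mattila's $F$/$K$ decomposition (the step you correctly flag as the main obstacle, which the paper carries out explicitly and which incurs an $\epsilon$-loss in the exponent of $r$, harmlessly absorbed by the strict inequality in \eqref{tassumption}), then integrate against $\mu$, apply Lemma~\ref{csreversal}, and pass to the limit via weak convergence and lower semicontinuity of the energy. The only implementation differences are that the paper mollifies both $\nu$ and $\delta_x$ symmetrically (rather than only $\nu$) and takes the limit in the mollification parameter before, rather than after, integrating in $\mu$.
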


\begin{sloppypar} One corollary of Theorem~\ref{pinned} is that for any Borel set $A$ with $(n-1)/2 < \dim A \leq n/2$, 
\begin{equation} \label{oberlinoberlin} \forall \epsilon >0 \, \exists  x \in A: \,  \dim d_x(A) \geq \dim A - \frac{(n-1)}{2} - \epsilon. \end{equation}
This is originally due to Oberlin-Oberlin \cite{oberlinK}, and follows from \eqref{energybound}, Frostman's lemma and the lower bound $\beta(\alpha, S^{n-1}) \geq (n-1)/2$ for $\alpha \geq (n-1)/2$ \cite{mattila}. The lower bound in \eqref{oberlinoberlin} for the full distance set was proved earlier by Falconer \cite{falconer}. For $n \in \{2,3\}$, Shmerkin's bound from \cite{shmerkin} gives an improvement over \eqref{oberlinoberlin}, and therefore \eqref{oberlinoberlin} is not sharp for $n \in \{2,3\}$ and $(n-1)/2 < \dim A \leq n/2$. \end{sloppypar}

For $\dim A > n/2$, another corollary of Theorem~\ref{pinned} is that 
\begin{equation} \label{etcetc} \forall \epsilon >0 \, \exists  x \in A: \, \dim d_x(A) \geq \min\left\{ 1, 2 \dim A-\frac{\dim A}{n} - (n-1) \right\}-\epsilon, \end{equation}
which follows from \eqref{energybound} and the following inequality from \cite{duzhang}:
\[ \beta\left(\alpha, S^{n-1} \right) \geq \frac{\alpha(n-1)}{n}, \quad n/2 \leq \alpha \leq n. \]
When $n =2$, \eqref{etcetc} is weaker than the combined results of \cite{liu2, shmerkin2, shmerkin} for all values of $\dim A$. For even $n \geq 4$, \eqref{etcetc} is weaker than what would likely follow from the methods in \cite{DIOWZ,liu2}. For odd $n \geq 3$, and 
\[ \frac{n}{2} < \dim A \leq \frac{n}{2} + \frac{1}{4} + \frac{1}{8n-4}, \]
 \eqref{etcetc} is new (as far as I am aware), with the exception that if $n=3$ and $\dim A$ is sufficiently close to $3/2$, the bound from \cite{shmerkin} is better than \eqref{etcetc} by a small absolute constant.    
\begin{proof}[Proof of Theorem~\ref{pinned}] Assume that $\gamma > (n-1)/2$; the proof for the case $\gamma \leq (n-1)/2$ is virtually identical with one simplification. By scaling it may be assumed that $\mu$ and $\nu$ are probability measures. Let $f$ and $g$ be non-negative smooth compactly supported functions on $\mathbb{R}^n$, and abbreviate $\Delta(f,g) = \Delta$, which is a finite measure by \cite[Lemma~4.3]{mattila}. Then 
\begin{equation} \label{bb1} I_t( \Delta ) \lesssim |I_t(F) | + |I_t(\Delta, K)| \quad \text{(\cite[p.~221]{mattila})}. \end{equation}
The functions $F$, $K$ and corresponding quantities $I_t(F)$, $I_t(\Delta,K)$ from \cite{mattila} will not be redefined here; all that will be needed is that $F$ satisfies
 \begin{align} \label{bb2} |I_t(F)| &\lesssim \int_0^{\infty} r^{t+n-2} \left\lvert \sigma(f,g)(r) \right\rvert^2 \, dr \quad \text{(\cite[p.~221]{mattila})} \\
\notag &\lesssim \left\lVert f \right\rVert_1^2\left\lVert g \right\rVert_1^2 + \int_1^{\infty} r^{t+n-2} \left\lvert \sigma(f,g)(r) \right\rvert^2 \, dr, \end{align}
and that $K$ satisfies both
\begin{equation} \label{bb3}  \left\lvert I_t(\Delta, K) \right\rvert = \left\lvert \int_0^{\infty} \int_0^{\infty} \Delta(s) K(x) |s-x|^{-t} \, ds \, dx\right\rvert \quad \text{(\cite[p.~221]{mattila})}, \end{equation}
and
\begin{equation} \label{bb4} K(x)  = x^{1/2} \int_0^{\infty} r^{n/2} R(rx) \sigma(f,g)(r) \, dr , \quad x>0,  \quad \text{(\cite[Lemma~4.14]{mattila})},\end{equation}
where $R: (0, \infty) \to \mathbb{C}$ is some Borel function with 
\begin{equation} \label{bb5} \left\lvert R(x) \right\rvert \lesssim \min\left\{ x^{-1/2}, x^{-3/2} \right\}, \quad \forall x >0 \quad \text{(\cite[Lemma~4.11]{mattila}).} \end{equation}
Interpolating the two bounds in \eqref{bb5} gives 
\begin{equation} \label{nbb6} \left\lvert R(x) \right\rvert \lesssim x^{(t-3)/2},\quad \forall x >0. \end{equation}
Let $\epsilon >0$ be small, to be chosen later. Using \eqref{bb3}, \eqref{bb4} and then \eqref{nbb6} gives 
\begin{align} \notag \left\lvert I_t(\Delta, K) \right\rvert &= \left\lvert \int_0^{\infty} \Delta(s) \int_0^{\infty} r^{n/2} \sigma(f,g)(r) \int_0^{\infty}  |s-x|^{-t} x^{1/2}  R(rx)  \, dx \, dr \, ds  \right\rvert \\
\notag &\lesssim  \int_0^{\infty} \Delta(s) \int_0^{\infty} r^{(n+t-3)/2} \left\lvert\sigma(f,g)(r)\right\rvert \int_0^{\infty}  |s-x|^{-t} x^{(t-2)/2} \, dx \, dr \, ds  \\
\label{3rdlast} &\sim \left(\int_0^{\infty} s^{-t/2}\Delta(s) \, ds\right)\cdot \left( \int_0^{\infty} r^{(n+t-3)/2} \left\lvert\sigma(f,g)(r) \right\rvert  \, dr  \right)\\
\label{2ndlast} &\lesssim \left( \int_0^{\infty} r^{(n+t-3)/2} \left\lvert \sigma(f,g)(r)\right\vert  \, dr  \right)^2 \\
\label{Kbound} &\lesssim_{\epsilon} \left\lVert f \right\rVert_1^2\left\lVert g \right\rVert_1^2 + \int_1^{\infty} r^{n+t-2+\epsilon} \left\lvert \sigma(f,g)(r) \right\rvert^2 \, dr. \end{align} 
 To get from \eqref{3rdlast} to \eqref{2ndlast}, the term $\int_0^{\infty} s^{-t/2}\Delta(s) \, ds$ is equal to $I_{(n+t-1)/2}(f,g)$ by definition, which equals a constant multiple of $\int_0^{\infty} r^{(n+t-3)/2} \sigma(f,g)(r)  \, dr$ by polar coordinates and the Fourier formula for the mutual energy \cite[Eq.~3.5]{mattila}. 

\begin{sloppypar} Substituting \eqref{bb2} and  \eqref{Kbound} into  \eqref{bb1} gives
\begin{equation} \label{tool} I_t( \Delta(f,g) ) \lesssim_{\epsilon} \left\lVert f \right\rVert_1^2\left\lVert g \right\rVert_1^2+ \int_1^{\infty} r^{n+t-2+\epsilon} \left\lvert \sigma(f,g)(r) \right\vert^2 \, dr, \end{equation}
for any smooth, non-negative compactly supported functions $f$ and $g$. For each integer $j \geq 1$ let $\phi_j(z) = j^n\phi(jz)$, where $\phi$ is a non-negative radial bump function on $\mathbb{R}^n$ which is compactly supported in the unit ball and satisfies $\int \phi = 1$. For fixed $x \in \supp \mu$, taking $f =f_j =  \nu \ast \phi_j$ and $g =g_{j,x} =  \delta_x \ast \phi_j$ in \eqref{tool} gives
\begin{multline} \label{firstline} I_t(\Delta(\nu, \delta_x ) ) \leq \liminf_{j \to \infty} I_t\left( \Delta\left(f_j,g_{j,x}\right) \right) \\
\lesssim_{\epsilon}  1+\liminf_{j \to \infty } \int_1^{\infty} r^{n+t-2+\epsilon} \left\lvert \sigma\left(f_j,g_{j,x}\right)(r) \right\vert^2 \, dr. \end{multline}
The first inequality in \eqref{firstline} is justified by the following argument. If $\nu_m := \nu\restriction{A_m}$ where $A_m = \left\{ y \in \supp \nu: \int \left\lvert y-z \right\rvert^{-\gamma} \, d\nu(z) \leq m \right\}$,
then $c_{\gamma}\left( \nu_m \right) < \infty$ (see \cite[p.~20]{mattila2}), so by using 
\[ I_t(\Delta(\nu, \delta_x ) ) = \lim_{m \to \infty} I_t(\Delta(\nu_m, \delta_x ) ), \qquad I_t(\Delta(\nu_m \ast \phi_j, g_{j,x} ) ) \leq I_t(\Delta(f_j, g_{j,x} ) ), \]
it may be assumed in proving the first part of \eqref{firstline} that $c_{\gamma}(\nu) < \infty$. By \cite[Theorem~2.8]{billingsley} $f_j \times g_{j,x} \stackrel{*}{\rightharpoonup} \nu \times \delta_x$, and hence $d_{\#}(f_j \times g_{j,x}) \stackrel{*}{\rightharpoonup} d_{\#}(\nu \times \delta_x)$. Using the definition of $\Delta$, Fubini's theorem, and the assumption that $c_{\gamma}(\nu) < \infty$, gives 
\begin{equation} \label{excludeorigin} \Delta\left(f_j \times g_{j,x}\right)\left[ 0, \varepsilon \right] \lesssim c_{\gamma}(\nu)\varepsilon^{\gamma - (n-1)/2}, \quad \forall \varepsilon >0, \end{equation}
and similarly
\begin{equation}  \label{excludeorigin2} \Delta\left(\nu \times \delta_x\right)\left[ 0, \varepsilon \right] \lesssim c_{\gamma}(\nu)\varepsilon^{\gamma - (n-1)/2}, \quad \forall \varepsilon >0. \end{equation}
The condition $d_{\#}(f_j \times g_{j,x}) \stackrel{*}{\rightharpoonup} d_{\#}(\nu \times \delta_x)$ combined with \eqref{excludeorigin} and \eqref{excludeorigin2} yields $\Delta(f_j \times g_{j,x}) \stackrel{*}{\rightharpoonup} \Delta(\nu \times \delta_x)$. Therefore the Fourier transform of $\Delta(f_j, g_{j,x})$ converges pointwise to the Fourier transform of $\Delta(\nu, \delta_x )$. Fatou's lemma and the Fourier formula for energy (\cite[Eq.~3.5]{mattila}) then give the first part of \eqref{firstline}. \end{sloppypar}

 Integrating the outer parts of \eqref{firstline} with respect to $\mu$ gives 
\begin{multline} \label{dct} \int I_t(\Delta(\nu, \delta_x ) ) \, d\mu(x)   \\
\lesssim_{\epsilon}  1+  \liminf_{j \to \infty } \int_1^{\infty}  \int r^{n+t-2+\epsilon} \left\lvert \sigma\left(f_j,g_{j,x}\right)(r) \right\vert^2 \, d\mu(x) \, dr. \end{multline}
The inequality $\left\lvert \sigma\left(f_j,g_{j,x}\right)(r) \right\vert \leq \left\lvert \sigma\left(\nu,\delta_x\right)(r) \right\vert$ holds for every $r>0$ and $x \in \supp \mu$, since $\phi$ is a radial bump function which integrates to 1. Moreover, $\sigma\left(f_j,g_{j,x}\right)(r) \to \sigma(\nu, \delta_x)(r)$ as $j \to \infty$, pointwise for every $r>0$ and $x \in \supp \mu$. Therefore applying dominated convergence and then Lemma~\ref{csreversal} to \eqref{dct} gives 
\begin{align*} \int I_t(\Delta(\nu, \delta_x ) ) \, d\mu(x)   &\lesssim_{\epsilon} 1+   \int_1^{\infty}  \int r^{n+t-2+\epsilon} \left\lvert \sigma\left(\nu,\delta_x \right)(r) \right\vert^2 \, d\mu(x) \, dr \\
&= 1 + \int_1^{\infty} r^{n+t-2+\epsilon} \int \left\lvert \left(\widehat{\sigma_r} \ast \nu \right)(x)  \right\vert^2 \, d\mu(x) \, dr \\
 & \lesssim_{\epsilon} 1 + c_{\alpha}(\mu)\int_1^{\infty} r^{n+t-2+2\epsilon- \beta\left(\alpha, S^{n-1}\right)} \int \left\lvert \widehat{\nu}(r\xi) \right\rvert^2 \, d\sigma(\xi) \, dr \\
& \lesssim c_{\alpha}(\mu)I_{\gamma}(\nu),  \end{align*}
by \eqref{tassumption}, provided $\epsilon$ is small enough, again using polar coordinates and the Fourier formula for the energy (\cite[Eq.~3.5]{mattila}). This proves \eqref{energybound} when $\gamma > (n-1)/2$. The only adjustment required in the case $\gamma \leq (n-1)/2$ is that the first part of \eqref{firstline} must be replaced by $I_t(d_{\#}(\nu, \delta_x ) ) \lesssim \liminf_{j \to \infty} I_t\left( \Delta\left(f_j,g_{j,x}\right) \right)$. \end{proof}

\end{document}